\renewcommand\geq{\geqslant}
\newcommand{\R}{\mathbb{R}}
\newcommand{\Z}{\mathbb{Z}}
\newcommand{\pder}[2]{\frac{\partial \, #1}{\partial \, #2} }
\theoremstyle{definition}
\newtheorem{definition}{Определение}
\theoremstyle{remark}
\theoremstyle{plain}
\newtheorem{theorem}{Теорема}
\newtheorem{lemma}{Лемма}
\newtheorem{proposition}{Предложение}
\newtheorem{corollary}{Следствие}
\theoremstyle{definition}
\newtheorem{example}{Пример}
\def\g{\mathfrak{g}}
\def\se{\mathfrak{se}}
\def\ga{\gamma}
\def\f{\varphi}
\def\lam{\lambda}
\def\spann{\operatorname{span}\nolimits}
\def\SE{\operatorname{SE}\nolimits}
\def\SO{\operatorname{SO}\nolimits}
\def\SU{\operatorname{SU}\nolimits}
\def\SL{\operatorname{SL}\nolimits}
\def\SH{\operatorname{SH}\nolimits}
\def\Id{\operatorname{Id}\nolimits}
\def\Isom{\operatorname{Isom}}
\def\ad{\operatorname{ad}\nolimits}
\def\cut{\operatorname{cut}\nolimits}
\def\tcut{t_{\cut}}
\def\tu{\widetilde{u}}
\def\tg{\widetilde{g}}
\def\og{\overline{g}}
\renewcommand\Vec{\operatorname{Vec}\nolimits}
\newcommand{\be}[1]{\begin{equation}\label{#1}}
\newcommand{\ee}{\end{equation}}
\newcommand{\eq}[1]{$(\protect\ref{#1})$}
\newcommand{\map}[3]{#1 \, : \, #2 \to #3}
\newcommand{\mapto}[3]{#1 \, : \, #2 \mapsto #3}
\begin{document}

%УДК 517.977

%\hspace{12cm} Процессы управления

\begin{center}
\Large
Однородные субримановы геодезические\\на группе движений плоскости\footnote{Исследование  
выполнено за счет гранта Российского научного фонда 
(проект № 17-11-01387-П) в Институте программных систем им. А.К. Айламазяна Российской академии наук.}
\\[1cm]

Ю.\,Л. Сачков

\today

\begin{abstract}
Описаны однородные субримановы геодезические для стандартной субримановой структуры на группе собственных движений плоскости $\SE(2)$. Показано, что эта структура не является геодезически орбитальной, несмотря на инвариантность времени разреза при сдвиге начальной точки вдоль геодезических.
\end{abstract}
\end{center}
%\newpage

В римановой геометрии известны понятия однородных геодезических и геодезически орбитальных пространств \cite{KV, KS}. В субримановой геометрии они практически не исследованы, нам известна на эту тему только работа \cite{podobryaev_GO}. Цель данной заметки --- изучение этих свойств для стандартной субримановой структуры на группе собственных движений плоскости, включая их связь с инвариантностью времени разреза при сдвиге начальной точки вдоль геодезических.

\section{Однородные и эквиоптимальные субримановы \\геодезические на группах Ли}
Пусть на гладком многообразии $M$ задана   субриманова структура \cite{montgomery, ABB}. Обозначим через $\Isom(M)$ группу изометрий субриманова многообразия $M$. 

\begin{definition}
Субриманова геодезическая $\ga \subset M$  называется {\em однородной}, если она является однородным пространством некоторой однопараметрической подгруппы в $\Isom(M)$, т.е. существует однопараметрическая подгруппа $\{\f_s \mid s \in \R\} \subset \Isom(M)$  такая, что:
\begin{enumerate}
\item
$\forall \ s \in \R \quad \f_s(\ga) \subset \ga$, 
\item
$\forall \ g_1, \, g_2 \in \ga \quad \exists \ s \in \R \ : \quad \f_s(g_1) = g_2$.
\end{enumerate}

Субриманово многообразие называется {\em геодезически орбитальным}, если все его геодезические однородны.
\end{definition}

\begin{definition}
{\em Временем разреза} для геодезической $g(t)$, $t \geq 0$, соответствующим начальному моменту $t = 0$, называется величина
$$
\tcut(g(\cdot)) = \sup \{ T > 0 \mid g(t) \text{ оптимальна при } t \in [0, T]\}.
$$

Геодезическая
$$
g(t), \quad t \in [0, T], \qquad T = \tcut(g(\cdot)),
$$
называется {\em непродолжаемой кратчайшей}.
\end{definition}

Пусть на группе Ли $G$ задана левоинвариантная субриманова структура с ортонормированным репером $(X_1, \dots, X_k)$, $X_i \in \Vec(G)$. 
Будем говорить, что геодезическая $\{g(t)\} \subset G$  соответствует управлению $u(t) = (u_1(t), \dots, u_k(t))$, $u_i \in L^{\infty}$, если $\dot g(t) = \sum_{i=1}^k u_i(t) X_i(g(t))$.

Согласно принципу максимума Понтрягина \cite{PBGM, notes}, любая нормальная геодезическая $\{g(t)\} \subset  G$  есть проекция нормальной экстремали $\{\lam(t)\} \subset T^*G$:
\be{Ham}
\dot\lam(t) = \vec{H}(\lam(t)), \qquad \lam(t) \in T_{g(t)}^* G, 
\ee
где $\vec{H} \in \Vec(T^*G)$  есть гамильтоново векторное поле с гамильтонианом $H = \frac 12 \sum_{i=1}^k h_i^2 \in C^{\infty}(T^*G)$, $h_i(\lam) = \langle \lam, X_i(\pi(\lam))\rangle$, и $\map{\pi}{T^*G}{G}$  есть каноническая проекция.

Кокасательное расслоение $T^*G$  группы Ли $G$  тривиализуется левыми сдвигами $\mapto{L_g}{g_0}{gg_0}$, $g, g_0 \in G$:
\begin{align*}
&\map{\Phi}{\g^* \times G}{T^*G}, \qquad (p, g) \mapsto L_g^*p,\\
&\langle L_g^*p, L_{g*} \xi\rangle = \langle p, \xi\rangle, \qquad p \in \g^*, \ \xi \in \g, \ g \in G.
\end{align*}
В этой тривиализации гамильтонова система \eq{Ham}  становится треугольной, см. \cite{notes}:
\begin{align}
&\dot p = \left( \ad \frac{\partial  H}{\partial  p} \right)^* p, \qquad p \in \g^*, \label{vert}\\
&\dot g = L_{g_*} \frac{\partial H}{\partial  p}, \qquad g \in G. \nonumber
\end{align}
Обозначим вертикальную компоненту гамильтонова поля в правой части уравнения \eq{vert}  через $\vec{H}_v \in \Vec(\g^*)$.

Переходя к натурально параметризованным геодезическим ($\sum_{i=1}^k u_i^2(t) \equiv 1$),  будем считать, что $p \in C := \g^* \cap \{ H = \frac 12 \}$. Тогда время разреза на нормальных геодезических $g(t) = \pi \circ e^{t \vec H}(p, \Id)$  становится функцией $\map{\tcut}{C}{(0, + \infty]}$.

\begin{definition}
Пусть $g(t)$, $t \in \R$,  есть натурально параметризованная геодезическая в субримановом многообразии $M$.  Геодезическая $g(t)$  называется {\em эквиоптимальной}, если она удовлетворяет следующему свойству: 
 если $g(t)$, $t \in [0, T]$,  есть непродолжаемая кратчайшая, то для любого $\tau \in \R$  геодезическая $g(t+\tau)$, $t \in [0, T]$,  есть также непродолжаемая кратчайшая.

Субриманово многообразие $M$ называется {\em эквиоптимальным}, если любая его натурально параметризованная геодезическая эквиоптимальна.
\end{definition}

\begin{lemma}\label{lem:t1}
Пусть $\{g(t)\} \subset  G$  есть субриманова геодезическая с управлением $u(t)$, и пусть $g_1 \in G$. Тогда кривая $\tg(t) = g_1 g(t+\tau)$  есть субриманова геодезическая с управлением $\tu(t) = u(t+\tau)$.
\end{lemma}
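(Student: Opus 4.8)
The plan is to factor the transformation $g(t)\mapsto\tg(t)=g_1\,g(t+\tau)$ into two independent elementary operations: the constant time shift $t\mapsto t+\tau$ acting on the parameter, and the left translation $L_{g_1}\colon g\mapsto g_1 g$ acting on the group. For each of these I would check separately that (i) it transforms the control in the claimed way and (ii) it sends geodesics to geodesics. Since the two operations act on disjoint variables, their effects compose without interference, and the lemma follows by applying them one after the other.

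First I would establish the control transformation directly from the defining relation $\dot g(t)=\sum_i u_i(t)X_i(g(t))$. For the time shift the chain rule gives $\frac{d}{dt}g(t+\tau)=\sum_i u_i(t+\tau)X_i(g(t+\tau))$, so the shifted curve carries the control $u(\cdot+\tau)$. For the left translation I would differentiate $L_{g_1}(g(t))$ and use the \emph{left-invariance} of the orthonormal frame, $L_{g_1{}_*}X_i(g)=X_i(g_1 g)$, obtaining $\frac{d}{dt}\bigl(g_1 g(t)\bigr)=\sum_i u_i(t)X_i(g_1 g(t))$; thus left translation leaves the control unchanged. Composing the two yields $\dot{\tg}(t)=\sum_i u_i(t+\tau)X_i(\tg(t))$, i.e. $\tg$ corresponds to $\tu(t)=u(t+\tau)$, as asserted.

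It then remains to confirm that $\tg$ is genuinely a \emph{geodesic} and not merely an admissible horizontal curve with the right velocity. Here I would use that a normal geodesic is the projection $g(t)=\pi\circ e^{t\vec H}(p(t),g(t))$ of a solution of the triangular system \eq{vert}. The time shift is immediate: since $\vec H$ is autonomous, $(p(t+\tau),g(t+\tau))$ is again a solution, so $g(t+\tau)$ is a geodesic with vertical part $p(t+\tau)$. For the left translation I would exploit that the vertical equation \eq{vert} does not involve $g$ at all, while the horizontal equation $\dot g=L_{g_*}\frac{\partial H}{\partial p}$ is left-invariant: replacing $g$ by $g_1 g$ gives $\frac{d}{dt}(g_1 g)=L_{g_1{}_*}L_{g_*}\frac{\partial H}{\partial p}=L_{(g_1 g)_*}\frac{\partial H}{\partial p}$, so $(p(t),g_1 g(t))$ solves the same system with the \emph{same} vertical part $p(t)$. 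Hence $g_1 g(t)$ is again a geodesic, which also reconfirms that the control $u_i=h_i(p(t))=p_i(t)$ is unaffected by the translation.

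The step I expect to require the most care is precisely this geodesic-versus-admissible distinction: one must ensure the transformed curve is still the projection of a normal extremal, rather than just a horizontal curve with the correct tangent vector. This is exactly what the left-invariance of the Hamiltonian system in the trivialization $\Phi$ guarantees — equivalently, the observation that $L_{g_1}\in\Isom(G)$ and that isometries map normal extremals to normal extremals. Once this invariance is recorded, the lemma is obtained simply by composing the time shift with the left translation.
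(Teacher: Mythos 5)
Your proposal is correct and follows essentially the same route as the paper: factor the map into the time shift and the left translation $L_{g_1}$, note that each preserves geodesics (the shift by the definition of a geodesic, the translation by left-invariance of the structure), and compute the control via the chain rule together with $L_{g_1*}X_i(g)=X_i(g_1g)$. Your extra verification through the triangular Hamiltonian system \eq{vert} is a more detailed justification of the step the paper dispatches with ``by left-invariance,'' not a different argument.
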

\begin{proof}
Кривая $g(t+\tau)$  есть геодезическая по определению геодезической, а ее левый сдвиг $\tg(t) = L_{g_1}(g(t+\tau))$ есть геодезическая в силу левоинвариантности субримановой структуры. Вычислим управление $\tu(t)$, соответствующее геодезической $\tg(t)$:
\begin{align*}
\dot{\tg}(t) &= \frac{d}{dt} L_{g_1}(g(t+\tau)) = L_{g_1*} \dot g(t+\tau) = 
L_{g_1*} \sum_{i=1}^k u_i(t+\tau) X_i(g(t+\tau)) = \\
&=  \sum_{i=1}^k u_i(t+\tau) L_{g_1*} X_i(g(t+\tau)) = \sum_{i=1}^k u_i(t+\tau)   X_i(\tg(t)),
\end{align*}
поэтому $\tu(t) = u(t+\tau)$.
\end{proof}

\begin{proposition}\label{prop:tcut_inv}
Нормальная геодезическая $g(t) = \pi \circ e^{t \vec H}(p, \Id)$, $t \in \R$,  эквиоптимальна тогда и только тогда, когда
время разреза инвариантно относительно выбора начального момента, т.е.
\be{tcut}
\tcut \circ e^{\tau \vec H_v}(p) = \tcut(p),
\qquad  p \in C, \quad \tau \in \R.
\ee
\end{proposition}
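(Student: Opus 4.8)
The plan is to reduce equioptimality of the fixed geodesic to the invariance \eqref{tcut} by recognizing every time shift of the geodesic, followed by a suitable left translation, as a geodesic issued from $\Id$ whose initial covector is the $\vec H_v$-flow of $p$. The whole argument then becomes an unwinding of the definitions, since the value $T$ for which $g|_{[0,T]}$ is a non-extendable shortest path is nothing but $\tcut(p)$, and it is the transported cut times $\tcut(e^{\tau\vec H_v}(p))$ that decide whether the shifted arcs are non-extendable shortest paths.

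First I would establish the key identification: for each $\tau\in\R$,
$$
g(\tau)^{-1}g(t+\tau) = \pi\circ e^{t\vec H}\bigl(e^{\tau\vec H_v}(p),\,\Id\bigr), \qquad t \in \R.
$$
To see this, set $g_1 = g(\tau)^{-1}$; by Lemma \ref{lem:t1} the left shift $\tg(t) = g_1 g(t+\tau)$ is a geodesic issued from $\Id$. I would then invoke the triangular structure of the trivialized system: the vertical field $\vec H_v$ lives on $\g^*$, and the cotangent lift of $L_{g_1}$ acts on $T^*G \cong \g^*\times G$ by $(p,g)\mapsto(p,g_1 g)$, leaving the momentum coordinate untouched. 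Applied to the time-shifted extremal $t\mapsto e^{(t+\tau)\vec H}(p,\Id)$, whose momentum is $e^{(t+\tau)\vec H_v}(p)$, this yields the displayed formula. Since $H$ depends only on $p$ and is preserved by the flow, the covector $e^{\tau\vec H_v}(p)$ again lies in $C$, so $\tcut(e^{\tau\vec H_v}(p))$ is well defined.

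Next I would transport optimality through the isometry $L_{g_1}$. Left translations preserve the sub-Riemannian length, so $g|_{[\tau,\tau+S]}$ is optimal if and only if $\tg|_{[0,S]}$ is; hence the shifted curve $g(t+\tau)$, $t\in[0,T]$, is a non-extendable shortest path exactly when $T = \tcut(e^{\tau\vec H_v}(p))$. Combining this with the fact that $g|_{[0,T]}$ is a non-extendable shortest path precisely for $T = \tcut(p)$, the biconditional falls out: $g$ is equioptimal iff, for $T = \tcut(p)$, the arc $g(\cdot+\tau)|_{[0,T]}$ is a non-extendable shortest path for every $\tau$, i.e. $\tcut(e^{\tau\vec H_v}(p)) = \tcut(p)$ for all $\tau$, which is \eqref{tcut}. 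Reading the equivalences forward gives ($\Leftarrow$) and backward gives ($\Rightarrow$). The degenerate case $\tcut(p)=+\infty$ I would treat separately: then $g$ is a minimizer on every interval, every sub-arc of a minimizer is a minimizer, so each tail also has infinite cut time; both sides of the equivalence then hold (equioptimality vacuously, as no finite non-extendable shortest exists).

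I expect the only real obstacle to be the first step --- the rigorous justification that time shift composed with left translation corresponds to the vertical flow $e^{\tau\vec H_v}$ on the covector, equivalently that the left-trivialized cotangent lift fixes $p\in\g^*$. Once this is in place, the statement is a direct consequence of the definitions of cut time and equioptimality together with left-invariance of the metric.
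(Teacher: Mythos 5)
Your argument is essentially the paper's own proof: the paper likewise reduces optimality of the shifted arc $g(t+\tau)$ to that of the left-translated curve $\tg(t)=g(\tau)^{-1}g(t+\tau)$ via Lemma \ref{lem:t1}, and then identifies its cut time with $\tcut\circ e^{\tau\vec H_v}(p)$; you merely make explicit (correctly) the step that the left-trivialized cotangent lift of $L_{g(\tau)^{-1}}$ fixes the momentum coordinate, which the paper leaves implicit in the phrase about the controls $u(t)$ and $u(t+\tau)$ being simultaneously optimal. Your extra remark on the case $\tcut(p)=+\infty$ is a harmless addition at the same level of rigor as the paper (the subcase $\tau<0$ is glossed over in both).
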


\begin{proof}
Геодезическая $\og(t) = g(t+\tau)$, $t \in [0, T]$,  есть   непродолжаемая кратчайшая тогда и только тогда, когда таковой является геодезическая $\tg(t) = g_1^{-1} \og(t) = g_1^{-1} g(t+\tau)$, $g_1 = g(\tau)$, $t \in [0, T]$. Если геодезическая $g(t)$  соответствует управлению $u(t)$, то геодезическая $\tg(t)$  соответствует управлению $\tu(t) = u(t+\tau)$,  см. лемму \ref{lem:t1}. Наконец, равенство \eq{tcut} означает, что для любого $T > 0$ управления $u(t)$, $t \in [0, T]$,  и  $u(t+\tau)$, $t \in [0, T]$, одновременно оптимальны или неоптимальны.
\end{proof}

\begin{corollary}\label{cor:GO}
Стандартные левоинвариантные субримановы структуры на следующих группах Ли  эквиоптимальны:
\begin{enumerate}
\item
группа Гейзенберга,
\item
группы $\SO(3)$, $\SU(2)$, $\SL(2)$  с осесимметричной субримановой метрикой,
\item
группы $\SE(2)$ и $\SH(2)$,
\item
группы Энгеля и Картана.
\end{enumerate}
\end{corollary}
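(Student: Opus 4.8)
The plan is to reduce the claim, group by group, to the cut-time invariance criterion of Proposition~\ref{prop:tcut_inv}. By that proposition each structure in the list is equioptimal if and only if
\[
\tcut \circ e^{\tau \vec{H}_v}(p) = \tcut(p), \qquad p \in C, \quad \tau \in \R,
\]
so it is enough to verify that the cut time is constant along every trajectory of the vertical subsystem $\vec{H}_v$ on $C = \g^* \cap \{H = \tfrac12\}$. The unifying feature of all these groups is that the vertical subsystem \eq{vert} is a completely integrable Hamiltonian system with respect to the Lie--Poisson structure on $\g^*$: apart from the Hamiltonian $H$ it admits the Casimir functions of $\g^*$ as first integrals. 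For the Heisenberg group the conserved quantity is the vertical momentum; for $\SE(2)$ and $\SH(2)$, as well as for the Engel and Cartan groups, the vertical flow reduces to a pendulum and the relevant integral is its energy; for $\SO(3)$, $\SU(2)$, $\SL(2)$ with an axisymmetric metric the additional rotational integral plays this role.

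The decisive step is the observation that in each of these problems the cut time, computed explicitly in earlier work, is a function of the values of these first integrals alone and not of the position of $p$ inside a fixed trajectory. Concretely I would take, case by case, the known expression for $\tcut$---in the elliptic cases it is written through complete elliptic integrals of a modulus $k$ that is itself a function of the energy of the reduced pendulum-type system---and check that every argument entering it (energy, Casimir values, type of trajectory) is a constant of the motion of $\vec{H}_v$. Since $H \equiv \tfrac12$ on $C$ and every Casimir is preserved by any Hamiltonian flow, in particular by $e^{\tau \vec{H}_v}$, the composition $\tcut \circ e^{\tau \vec{H}_v}$ coincides with $\tcut$, and Proposition~\ref{prop:tcut_inv} gives equioptimality.

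I expect the main difficulty to be that this is not a single uniform computation but an assembly of the separate cut-time descriptions available for each structure, so that the real content sits in those descriptions. The elementary cases are the Heisenberg group, where $\tcut$ depends only on the conserved vertical momentum, and $\SE(2)$, $\SH(2)$, where $\tcut$ is an explicit function of the pendulum energy. The more delicate cases are the axisymmetric metrics on $\SO(3)$, $\SU(2)$, $\SL(2)$ and, above all, the Engel and Cartan groups: there the phase portrait of $\vec{H}_v$ splits into several invariant regions (oscillating, rotating, and separatrix regimes), and the factorization of $\tcut$ through the first integrals must be checked in each region separately, including the degenerate strata where the vertical flow is no longer periodic and $\tcut$ may be infinite. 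Verifying the identity on these separatrices, where the period of the vertical motion degenerates, is the point I expect to demand the most care.
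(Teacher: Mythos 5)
Your overall route coincides with the paper's own proof: there the corollary is obtained exactly by reduction to Proposition~\ref{prop:tcut_inv} together with an appeal to the cited works (\cite{versh_gersh}, \cite{boscain_rossi}, \cite{cut_sre2,sh2_3}, \cite{engel_cut,cartan_cut}) in which the invariance property \eq{tcut} is established. Your additional explanation --- that in each case $\tcut$ factors through first integrals of the vertical subsystem (the vertical momentum for Heisenberg, the pendulum energy for $\SE(2)$, $\SH(2)$, Engel and Cartan, the axial integral for $\SO(3)$, $\SU(2)$, $\SL(2)$) and is therefore constant along the flow of $\vec{H}_v$ --- is precisely the mechanism by which those papers arrive at \eq{tcut}, so it is a correct and useful gloss rather than a genuinely different argument.

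There is, however, one genuine omission. Proposition~\ref{prop:tcut_inv} concerns only \emph{normal} geodesics $g(t) = \pi \circ e^{t \vec H}(p, \Id)$, whereas equioptimality of the sub-Riemannian structure requires \emph{every} naturally parametrized geodesic to be equioptimal. You therefore still need the fact, stated as the first sentence of the paper's proof, that for all the listed groups all geodesics are normal. For the contact cases (Heisenberg, $\SO(3)$, $\SU(2)$, $\SL(2)$, $\SE(2)$, $\SH(2)$) this is automatic, since contact distributions admit no nontrivial abnormal extremal trajectories; but for the Engel and Cartan groups the rank-two distribution does carry nontrivial abnormal extremals, and one must invoke the known fact that the corresponding abnormal geodesics are simultaneously projections of normal extremals --- otherwise the criterion of Proposition~\ref{prop:tcut_inv} simply does not reach them, and your argument proves only that all normal geodesics are equioptimal, which is weaker than the statement of the corollary.
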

\begin{proof}
Все геодезические для указанных групп Ли нормальны.
Инвариантное свойство времени разреза \eq{tcut} для этих групп Ли доказано в соответствующих статьях:
\begin{enumerate}
\item
\cite{versh_gersh},
\item
\cite{boscain_rossi},
\item
\cite{cut_sre2, sh2_3},
\item
\cite{engel_cut, cartan_cut}.
\end{enumerate}
\end{proof}

\begin{proposition}\label{prop:equi}
Если  субриманова геодезическая однородна, то она эквиоптимальна.
\end{proposition}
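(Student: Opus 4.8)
The plan is to exploit the single fact that every isometry of $M$ preserves the sub-Riemannian distance, and hence carries non-extendable shortest arcs to non-extendable shortest arcs. Granting this, equioptimality of a homogeneous geodesic $g(\cdot)$ will follow once I produce, for each shift $\tau\in\R$, an isometry $\psi_\tau\in\Isom(M)$ realizing the reparametrization, i.e.\ $\psi_\tau(g(t))=g(t+\tau)$ for all $t$. The argument then splits into two parts: extracting such a $\psi_\tau$ from the homogeneity hypothesis, and transferring optimality through it.

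For the extraction I would start from the one-parameter subgroup $\{\f_s\}\subset\Isom(M)$ and the trace $\ga$ of $g$. By property~1 each $\f_s$ maps $\ga$ into itself, so $\f_s\circ g$ is again a unit-speed curve with image in $\ga$; writing $\f_s(g(t))=g(\sigma_s(t))$ and comparing speeds, $\sigma_s$ must be an arclength-preserving map of $\R$, so $\sigma_s(t)=\varepsilon_s t+c_s$ with $\varepsilon_s=\pm1$. Continuity of $s\mapsto\f_s$ together with $\f_0=\Id$ pins down $\varepsilon_s\equiv+1$, while the homomorphism law $\f_{s+s'}=\f_s\circ\f_{s'}$ forces $c_{s+s'}=c_s+c_{s'}$, giving $\f_s(g(t))=g(t+c_s)$. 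Finally I would invoke property~2: for a given $\tau$ it supplies $s$ with $\f_s(g(0))=g(\tau)$, whence $c_s=\tau$ (modulo the period, if $g$ is closed), and $\psi_\tau:=\f_s$ is the desired shift isometry.

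The transfer step should be routine: since $\psi_\tau$ preserves the distance $d$, for every $T'>0$ one has $d(g(\tau),g(T'+\tau))=d(g(0),g(T'))$, so the initial arc $g|_{[0,T']}$ realizes the distance between its endpoints exactly when $g(\cdot+\tau)|_{[0,T']}$ does; as $g$ is naturally parametrized, this means the two geodesics issuing from $g(0)$ and from $g(\tau)$ have equal cut time, which is precisely equioptimality (and is consistent with Proposition~\ref{prop:tcut_inv}). The step I expect to be delicate is the affine form of $\sigma_s$: the equation $\f_s(g(t))=g(\sigma_s(t))$ defines $\sigma_s$ as a genuine function only where $g$ is injective, so for a self-intersecting or periodic geodesic I would first establish it locally, using that a unit-speed geodesic is an immersion and hence locally injective, and then glue the locally affine pieces into a global affine map by connectedness of the parameter line; the sign and additivity analysis is then as above.
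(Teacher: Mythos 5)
Your proposal is correct and takes essentially the same route as the paper: for a given $\tau$ choose $s$ with $\f_s(g(0))=g(\tau)$ and transport the non-extendable shortest arc $g|_{[0,T]}$ by the isometry $\f_s$. The only difference is that you explicitly justify the key identity $\f_s(g(t))=g(t+\tau)$ (via the affine reparametrization and additivity of $c_s$), which the paper's proof simply asserts.
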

\begin{proof}
Пусть геодезическая $\ga = \{g(t) \mid t \in \R\}$, однородна, и пусть $\{\f_s \mid s \in \R\}$ есть однопараметрическая подгруппа в $\Isom(G)$, для которой $\ga$  есть однородное пространство. Далее, пусть $g(t)$, $t \in [0, T]$  есть непродолжаемая кратчайшая. Возьмем любое $\tau \in \R$ и найдем такое $s \in \R$, что $\f_s(g(0)) = g(\tau)$. Тогда $g(t+\tau)$, $t \in [0, T]$,  есть непродолжаемая кратчайшая так как $g(t+\tau) = \f_s(g(t))$, $t \in \R$. 
\end{proof}

В следующем разделе мы покажем ложность импликации, обратной к  предложению \ref{prop:equi}, на примере стандартной субримановой структуры на группе $\SE(2)$  {\cite{max_sre, cut_sre1, cut_sre2}}.

\section{Субримановы геодезические на группе $\SE(2)$}
Группа собственных движений евклидовой плоскости $\SE(2)$  есть полупрямое произведение группы параллельных переносов $\R^2$  и группы вращений плоскости $\SO(2)$: $\SE(2) = \R^2 \ltimes \SO(2)$. Эта группа имеет линейное представление
$$
\SE(2) = \left\{\left(
\begin{array}{ccc}
\cos \theta & - \sin \theta & x \\
\sin \theta & \cos \theta & y \\
0 & 0 & 1 
\end{array}
\right) \mid \theta \in S^1, \ x, \, y \in \R \right\}.
$$
Наряду с матричным обозначением, будем обозначать элементы этой группы как $(x,y,\theta)$.

Рассмотрим на группе $\SE(2)$  стандартную левоинвариантную субриманову структуру, порожденную ортонормированным репером
$$
X_1 = \cos \theta \pder{}{x} + \sin \theta \pder{}{y}, \qquad X_2 = \pder{}{\theta}.
$$
Субримановы геодезические и оптимальный синтез для этой структуры построены в работах \cite{max_sre, cut_sre1, cut_sre2}.

\begin{example}\label{ex:se2}
Отметим следующие геодезические для этой структуры:
\begin{itemize}
\item[$(1)$]
<<движение вперед>> $(x,y,\theta) = (t, 0, 0)$, $t \in \R$,
\item[$(2)$]
<<поворот на месте>> $(x,y,\theta) = (0, 0, t)$, $t \in \R$.
\end{itemize}

Проекции всех остальных геодезических на плоскость $(x,y)$  являются некомпактными кусочно-гладкими кривыми с точками возврата, см. Рис. \ref{fig:xyC1}--\ref{fig:xyC3}.
\end{example}

\begin{figure}[htbp]
\includegraphics[width=0.32\textwidth]{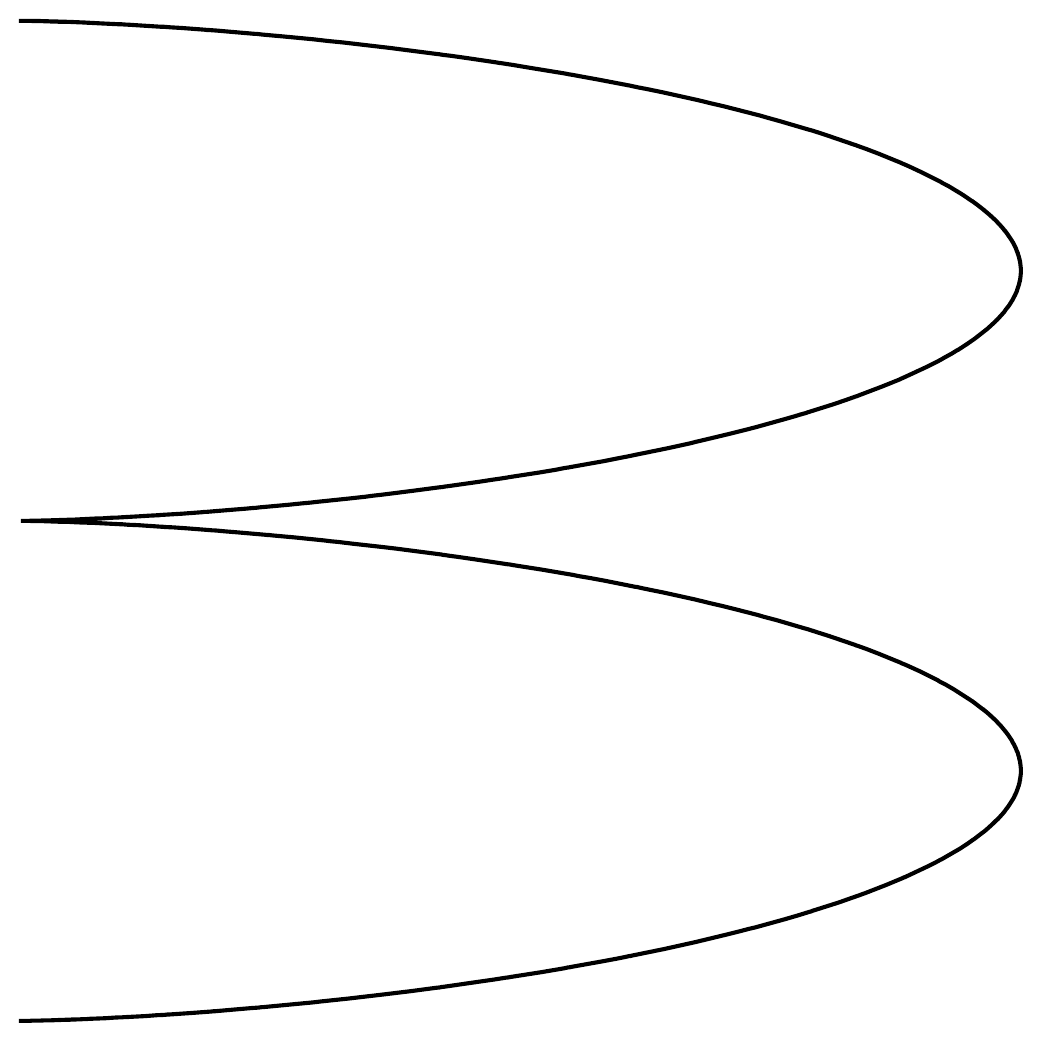}
\hfill
\includegraphics[width=0.32\textwidth]{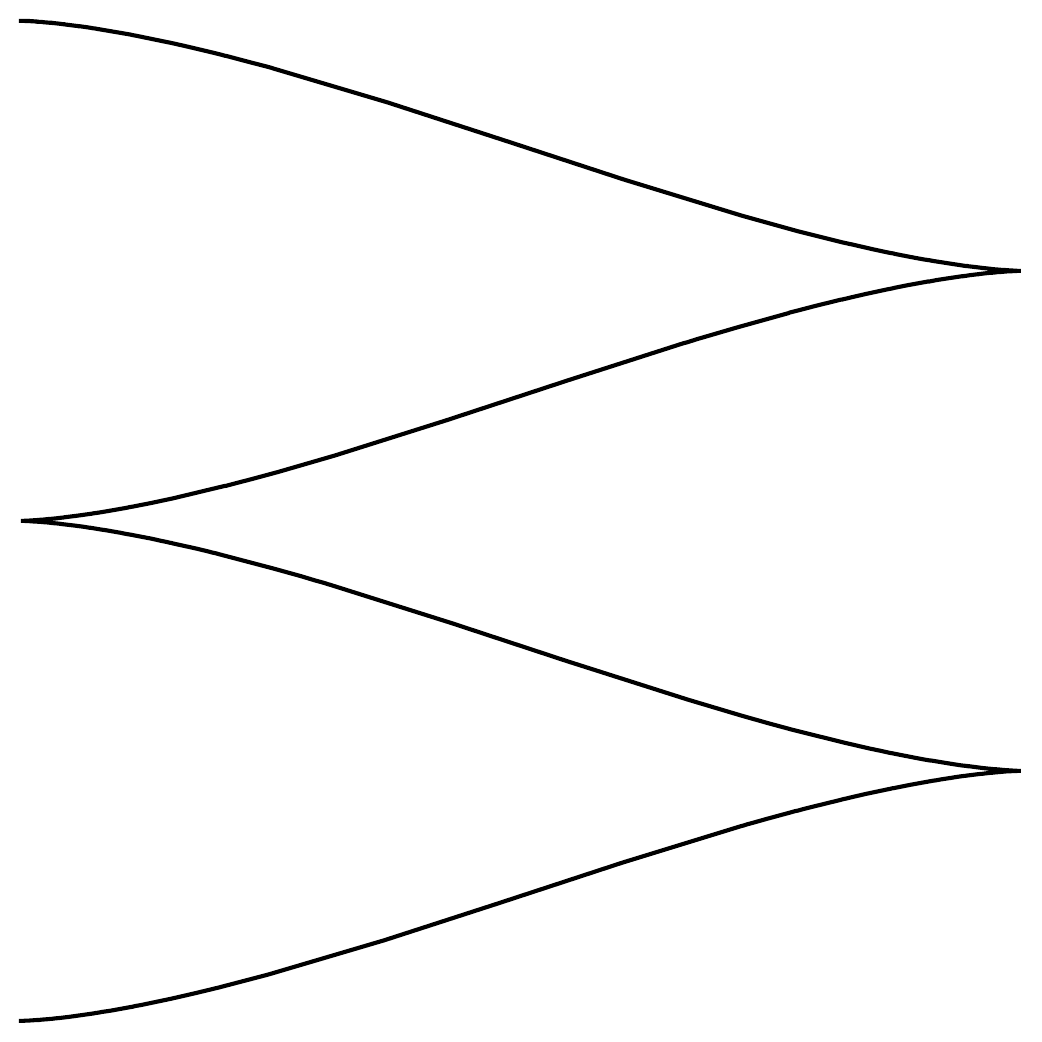}
\hfill
\includegraphics[width=0.32\textwidth]{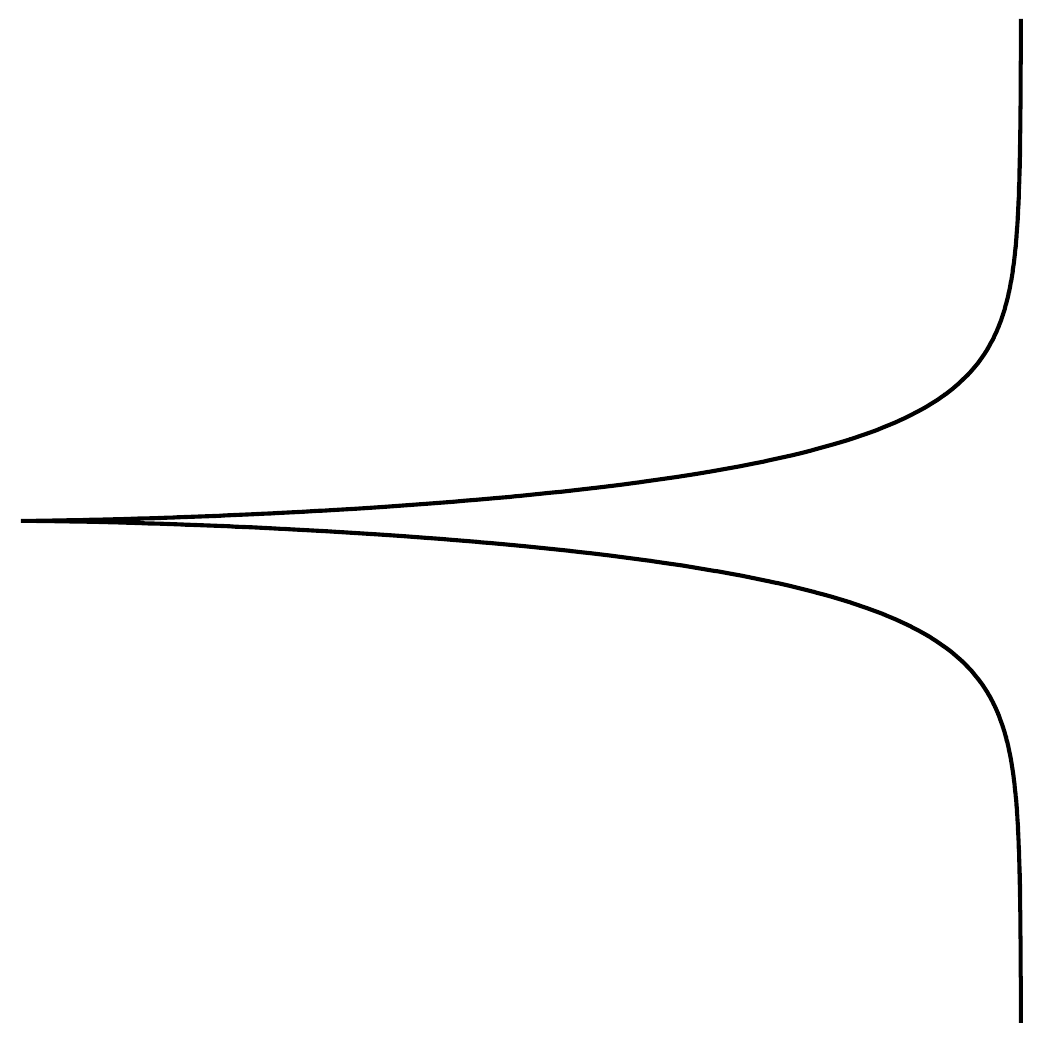}
\\
\parbox[t]{0.32\textwidth}
{\caption{Неинфлексионная геодезическая в $\SE(2)$}\label{fig:xyC1}}
\hfill
\parbox[t]{0.32\textwidth}
{\caption{Инфлексионная геодезическая в $\SE(2)$}\label{fig:xyC2}}
\hfill
\parbox[t]{0.32\textwidth}
{\caption{Критическая геодезическая  в $\SE(2)$}\label{fig:xyC3}}
\end{figure}

Группа $\Isom(\SE(2))$ была вычислена в работе \cite{bicycle}: там показано, что
$$
\Isom(\SE(2)) \cong \SE(2) \rtimes (\Z_2 \times \Z_2).
$$
Группа $\SE(2)$ действует на себе левыми сдвигами. Один сомножитель $\Z_2$ означает отражение в какой-либо оси на плоскости $(x,y)$, а другой сомножитель $\Z_2$ --- разворот.
В частности, все однопараметрические подгруппы в $\Isom(\SE(2))$ суть однопараметрические подгруппы в группе~$\SE(2)$.

\begin{theorem}\label{th:se2_GO}
Однородными геодезическими в группе $\SE(2)$ являются только геодезические $(1)$  и $(2)$, указанные в примере {\em \ref{ex:se2}}. 

Поэтому $\SE(2)$  не является геодезически орбитальным пространством.
\end{theorem}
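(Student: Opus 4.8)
The plan is to combine the description of $\Isom(\SE(2))$ with the vertical Hamiltonian subsystem $\vec{H}_v$. First I would reduce to one-parameter subgroups through $\Id$. Since one-parameter subgroups are connected, they lie in the identity component of $\Isom(\SE(2))$; by the splitting $\Isom(\SE(2))\cong\SE(2)\rtimes(\Z_2\times\Z_2)$ the discrete factor contributes nothing, so every homogeneous geodesic is an orbit of a subgroup $\{L_{h(s)}\mid s\in\R\}$ with $h(s)=e^{sA}$, $A\in\se(2)$, acting by left translations. Because left translations are isometries carrying geodesics to geodesics and preserving homogeneity, I may assume $g(0)=\Id$; then the transitivity condition of the definition forces the trace of $\ga$ to coincide with the orbit of $\Id$, which is exactly $\{e^{sA}\mid s\in\R\}$.

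Next I would identify $\ga$ itself as a one-parameter subgroup. As point sets the geodesic and the subgroup agree, and $s\mapsto e^{sA}$ has constant sub-Riemannian speed $|A|$ by left invariance; comparing with the unit-speed parametrization of $g$ yields $g(t)=e^{tA'}$ with $A'=\pm A/|A|$ and $|A'|=1$. Horizontality of the geodesic at $\Id$ gives $A'\in\spann\{X_1,X_2\}$, so $A'=aX_1+bX_2$ with $a^2+b^2=1$. In particular $g(t)=e^{t(aX_1+bX_2)}$ is a horizontal curve with constant control $u(t)\equiv(a,b)$.

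The decisive step is to determine for which $(a,b)$ this constant-control curve is actually a normal extremal. Writing $p=(h_1,h_2,h_3)$ in the basis of $\g^*$ dual to $X_1,X_2,X_3$, where $X_3=[X_2,X_1]$ and $[X_1,X_3]=0$, $[X_2,X_3]=-X_1$, the field $\vec{H}_v$ produces the Euler system
\[
\dot h_1 = h_2 h_3, \quad \dot h_2 = -h_1 h_3, \quad \dot h_3 = -h_1 h_2,
\]
and along a normal geodesic $u_i=h_i$. Constancy of the control means $\dot h_1\equiv\dot h_2\equiv0$, i.e. $h_2h_3\equiv0$ and $h_1h_3\equiv0$; since $h_1^2+h_2^2=1$ these force $h_3\equiv0$, whence $\dot h_3=-h_1h_2\equiv0$ gives $h_1h_2=0$. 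Thus $(a,b)\in\{(\pm1,0),(0,\pm1)\}$, which are precisely geodesics $(1)$ and $(2)$; and these are indeed homogeneous, being the orbits of the subgroups $e^{tX_1}$ and $e^{tX_2}$.

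Finally, geodesics with $h_3(0)\neq0$ exist — their plane projections are the cusped non-compact curves of Example~\ref{ex:se2} — so not every geodesic is homogeneous and $\SE(2)$ is not geodesic orbital. The main obstacle is exactly the third step: a horizontal one-parameter subgroup $e^{t(aX_1+bX_2)}$ is always a horizontal curve, yet it is a \emph{geodesic} only in the two exceptional directions, and this fact cannot be read off from horizontality but must be extracted from the vertical Hamiltonian dynamics $\vec{H}_v$.
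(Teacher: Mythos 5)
Your argument is correct, but it takes a genuinely different route from the paper's in the decisive step. You agree with the paper up to the reduction: every one-parameter subgroup of $\Isom(\SE(2))$ sits in the identity component and acts by left translations, so a homogeneous geodesic is (after a left translation moving $g(0)$ to $\Id$) exactly the orbit $\{e^{sA}\}$ of a one-parameter subgroup of $\SE(2)$, traversed with unit speed and constant control. The paper then proceeds \emph{extrinsically}: it computes the orbits through an arbitrary point $(x_0,y_0,\theta_0)$, observes that their projections to the $(x,y)$-plane are points, lines or circles, and rules out circles by appealing to the known classification of geodesic projections (the non-compact cusped curves of Example~\ref{ex:se2}), which rests on the explicit integration of the geodesic flow in the cited works. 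You instead argue \emph{intrinsically}: horizontality at $\Id$ forces $A\in\spann\{X_1,X_2\}$, and then the vertical subsystem $\dot h_1=h_2h_3$, $\dot h_2=-h_1h_3$, $\dot h_3=-h_1h_2$ on the level $h_1^2+h_2^2=1$ shows that a constant control is possible only for $h_3\equiv 0$ and $h_1h_2=0$, i.e.\ for the two exceptional directions. Your computation is consistent with your sign convention $X_3=[X_2,X_1]$, and the conclusion is convention-independent. What your route buys is self-containedness: you do not need the global picture of geodesic projections, only the Euler equations; what it costs is the (true, and implicitly used by the paper in Corollary~\ref{cor:GO}) fact that all geodesics on $\SE(2)$ are normal, since the distribution is contact and there are no strict abnormals --- you should state this explicitly, as your identification $u_i=h_i$ depends on it. A minor additional merit of your version: the paper's case of a line-orbit with $c=0$ still needs the tangency condition $(a,b)\parallel(\cos\theta_0,\sin\theta_0)$ to single out genuine geodesics among the straight-line orbits, which the paper leaves implicit, whereas your horizontality-at-$\Id$ step handles this automatically.
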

\begin{proof}
Вычислим однопараметрические подгруппы в $\Isom(\SE(2))$, т.е. в $\SE(2)$. Алгебра Ли группы Ли $\SE(2)$ есть
$$
\g = \se(2) = \spann(E_{13}, E_{23}, E_{21}-E_{12}),
$$
где $E_{ij}$ есть $3 \times 3$  матрица с единственным ненулевым элементом с строке $i$  и столбце $j$, равным~1. Однопараметрическая подгруппа, соответствующая ее элементу $X = a E_{13} + b E_{23} + c (E_{21}-E_{12})$  есть $e^{s X} = (x(s), y(s), \theta(s))$,  где координаты $x, y, \theta$  удовлетворяют задаче Коши
\begin{align*}
&\dot x = - c y + a, & x(0) = 0, \\
&\dot y =  c x + b, & y(0) = 0, \\
&\dot \theta =  c, & \theta(0) = 0.
\end{align*}
Поэтому $\theta(s) = c s$ и
\begin{align*}
&x(s) = as, \quad y(s) = b s \qquad &\text{при } c = 0, \\
&x(s) = \frac bc (\cos c s - 1) + \frac ac \sin cs, \quad y(s) = \frac bc \sin cs + \frac ac (1-\cos c s)  
\qquad &\text{при } c \neq 0.
\end{align*}
Орбита однопараметрической подгруппы  $\{ e^{sX}\} \subset \SE(2)$, проходящая через точку $(x_0, y_0, \theta_0) \in \SE(2)$,  имеет вид:
\begin{align*}
&(x_0 + a s, y_0 + b s, \theta_0) &\text{при } c = 0, \\
&\left(\left(x_0 + \frac bc\right) \cos cs + \left( \frac ac-y_0\right) \sin cs - \frac bc, \left(y_0 - \frac ac\right) \cos cs + \left(\frac bc+x_0\right) \sin cs + \frac ac, \theta_0+ c s\right) &\text{при } c \neq 0.
\end{align*}
Проекция  этой орбиты на плоскость $(x,y)$  есть следующая кривая:
\begin{itemize}
\item[$(1)$]
прямая при $c = 0$, $a^2 + b^2 \neq 0$, 
\item[$(2)$]
точка при $c \neq 0$, $\left(x_0 + \frac bc\right)^2 + \left( \frac ac-y_0\right)^2 = 0$, 
\item[$(3)$]
окружность при $c \left(\left(x_0 + \frac bc\right)^2 + \left( \frac ac-y_0\right)^2\right) \neq 0$. 
\end{itemize}
Из описания проекций геодезических на плоскость $(x, y)$ следует, что
однопараметрические подгруппы в случае (3) не могут быть геодезическими.
Значит, только геодезические из примера~\ref{ex:se2} однородны (случаи (1), (2)).
\end{proof}

Таким образом, на группе $\SE(2)$:
\begin{itemize}
\item
все геодезические эквиоптимальны (следствие \ref{cor:GO}),
\item
однородны только геодезические типов (1), (2) примера \ref{ex:se2} (теорема \ref{th:se2_GO}).
\end{itemize}
Поэтому группа $\SE(2)$  эквиоптимальна, но не геодезически орбитальна. 

Глубинная причина  эквиоптимальности группы $\SE(2)$ остается скрытой.
С другой стороны, группы, перечисленные в п. 1, 2 следствия \ref{cor:GO} геодезически орбитальны и эквиоптимальны \cite{podobryaev_GO}. Отметим также, что стандартная левоинвариантная субриманова структура на свободной двуступенной группе Карно с 3-мя образующими (вектор роста $(3,6)$) \cite{myasnich36} эквиоптимальна \cite{podobryaev_GO}, потому геодезически орбитальна по следствию \ref{cor:GO}.

\bigskip
Автор выражает благодарность А.В. Подобряеву за полезные обсуждения этой работы.

\end{document}